\documentclass[11pt]{article}

\usepackage[T1]{fontenc}
\usepackage{lmodern}
\usepackage{amsmath,amssymb,amsthm,mathtools}
\usepackage{microtype}
\usepackage[margin=1in]{geometry}
\usepackage[hidelinks]{hyperref}

\numberwithin{equation}{section}

\newtheorem{theorem}{Theorem}[section]
\newtheorem{proposition}[theorem]{Proposition}
\newtheorem{lemma}[theorem]{Lemma}
\newtheorem{corollary}[theorem]{Corollary}
\newtheorem*{question}{Question}
\theoremstyle{remark}
\newtheorem{remark}[theorem]{Remark}

\newcommand{\N}{\mathbb{N}}
\newcommand{\Pplus}{P^{+}}
\newcommand{\e}{\mathrm{e}}
\DeclareMathOperator{\lcm}{lcm}

\title{Long Intervals Without Distinct Multiples\\
of the First \(n\) Positive Integers}
\author{Scott Duke Kominers%
\thanks{\textsc{Harvard Business School; Department of Economics and Center of
Mathematical Sciences and Applications, Harvard University; and a16z crypto.}}}
\date{July 11, 2026}

\begin{document}

\maketitle

\begin{abstract}
For positive integers \(n\) and \(m\), let \(f(n,m)\) be the least
integer \(h\ge0\) such that \((m,m+h]\) contains distinct
integers \(a_1,\ldots,a_n\) satisfying \(i\mid a_i\) for
\(1\le i\le n\), and put \(F(n)=\max_{m\in\N} f(n,m)\).
A recent theorem of van Doorn gives
\(F(n)-f(n,n)>0.36\,n\log n/\log\log n\) for sufficiently large~\(n\).
We prove
\[
  \liminf_{n\to\infty}
  \frac{F(n)-f(n,n)}{n\log n}
  \ge \frac{1}{\e}.
\]
Thus, for every fixed \(c<1/\e\) and all sufficiently large~\(n\),
some interval of length \(c\,n\log n\) contains no system of
pairwise distinct multiples of \(1,2,\ldots,n\).
The proof applies an Erd\H{o}s--Pomerance smooth-number obstruction at
starting points \(m\asymp n\log n\), using local saddle-point estimates
of Hildebrand and Tenenbaum.
\end{abstract}

\section{Introduction}

Throughout the paper \(\N=\{1,2,3,\ldots\}\), and all logarithms are
natural.  For \(n,m\in\N\), define \(f(n,m)\) to be the least
nonnegative integer \(h\) such that the interval \((m,m+h]\) contains
distinct integers \(a_1,\ldots,a_n\) with
\[
  i\mid a_i \qquad (1\le i\le n).
\]
Every value \(f(n,m)\) is finite; indeed \(f(n,m)\le n(n+1)/2\), as one
sees by choosing \(a_n,a_{n-1},\ldots,a_1\) in this order, taking each
\(a_i\) to be the least multiple of \(i\) exceeding the previously
chosen element (and exceeding \(m\) when \(i=n\)).  Moreover, for fixed
\(n\) the function \(m\mapsto f(n,m)\) is periodic modulo
\(\lcm(1,2,\ldots,n)\), since translation by \(\lcm(1,2,\ldots,n)\) is a bijection
between the admissible systems of multiples in \((m,m+h]\) and those
in the translated interval.  Hence
\[
  F(n):=\max_{m\in\N} f(n,m)
\]
is finite and is attained.

The study of \(f(n,m)\) was initiated by Erd\H{o}s and Pomerance
\cite{ErdosPomerance}, who determined the diagonal quantity \(f(n,n)\)
up to a factor \(\sqrt{\log\log n}\),
\begin{equation}\label{eq:EP-bounds}
  \left(\frac{2}{\sqrt{\e}}+o(1)\right)
  n\sqrt{\frac{\log n}{\log\log n}}
  \;<\;
  f(n,n)
  \;<\;
  \bigl(2+o(1)\bigr)n\sqrt{\log n},
\end{equation}
and who also proved the general upper bound \(F(n)\ll n^{3/2}\).  In
the same paper they conjectured that
\begin{equation}\label{eq:EP-conj}
  F(n)-f(n,n) \longrightarrow \infty
\end{equation}
as \(n\to\infty\); Erd\H{o}s later offered \(1000\) rupees for a
solution \cite{Erdos92}, and the conjecture forms part of Problem~711
in Bloom's database \cite{Bloom711}, to which we return in
Remark~\ref{rem:711}.  Recently, van Doorn~\cite{vanDoorn} proved the conjecture~\eqref{eq:EP-conj} in
the stronger quantitative form
\begin{equation}\label{eq:vD}
  F(n)-f(n,n)
  > \frac{0.36\, n\log n}{\log\log n}
\end{equation}
for all sufficiently large \(n\).

The purpose of this
note is to show that a smooth-number obstruction going back to
Erd\H{o}s and Pomerance, deployed far from the diagonal, produces
intervals on the larger \(n\log n\) scale.

\begin{theorem}\label{thm:main}
We have
\[
  \liminf_{n\to\infty}
  \frac{F(n)-f(n,n)}{n\log n}
  \ge \frac{1}{\e}.
\]
\end{theorem}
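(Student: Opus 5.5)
Since \eqref{eq:EP-bounds} gives \(f(n,n)\le(2+o(1))n\sqrt{\log n}=o(n\log n)\), the theorem reduces to a lower bound for \(F(n)\) alone. Concretely, it suffices to show that for every fixed \(c<1/\e\) and all large \(n\) there is a starting point \(m\) with \(f(n,m)>c\,n\log n\). The natural tool is Hall's marriage theorem in its contrapositive form. If we find a set \(I\subseteq\{1,\dots,n\}\) of indices such that the set of integers in \((m,m+h]\) divisible by at least one \(i\in I\) has fewer than \(|I|\) elements, then no system of distinct multiples exists in \((m,m+h]\), and hence \(f(n,m)>h\).

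For the obstruction I would take \(I=\{i\in(n/K,n]: i \text{ is } y\text{-smooth}\}\), with a fixed \(K>1\) and a smoothness bound \(y=y(n)\) to be optimized. I would choose \(m\) so that every \(i\in I\) divides \(m\); for instance \(m\) could be a multiple of \(\prod_{p\le y}p^{\lfloor\log n/\log p\rfloor}\). Periodicity is harmless here, since \(F\) is a maximum over all \(m\). Writing \(a=m+b\) with \(0<b\le h\), an element \(i\in I\) divides \(a\) iff \(i\mid b\). Hence every candidate \(b\) must have \(y\)-smooth part \(s(b)>n/K\). The number of such \(b\le h\) is about
\[
  h\prod_{p\le y}\Bigl(1-\frac1p\Bigr)\sum_{\substack{s>n/K\\ s\ y\text{-smooth}}}\frac1s ,
\]
at least when \(h\) is large compared with the smooth parts involved. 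Smooth parts that are too large relative to \(h\) should be handled by truncating and using the trivial count \(h/s\). The Hall obstruction then asks that this quantity be smaller than \(|I|=\Psi(n,y)-\Psi(n/K,y)\).

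Both sides would be evaluated with the Hildebrand--Tenenbaum saddle-point estimates. Let \(\alpha=\alpha(n,y)\) be the saddle point. The local estimate \(\Psi(x/d,y)\approx d^{-\alpha}\Psi(x,y)\) gives \(|I|\approx(1-K^{-\alpha})\Psi(n,y)\). Partial summation of the same estimate gives \(\sum_{s>n/K}1/s\approx \frac{K^{1-\alpha}\Psi(n,y)}{(1-\alpha)\,n}\cdot(\text{saddle correction})\). Mertens' theorem gives \(\prod_{p\le y}(1-1/p)\sim \e^{-\gamma}/\log y\). The inequality therefore becomes
\[
  h<\frac{n\log y}{\text{an explicit function of }\alpha,K},
\]
up to factors \(1+o(1)\). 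I would take \(y=n^{1/u}\) with \(u\) in a regime where \(\log y\sim\log n\) still holds, or nearly so. Then optimizing over \(u\) (equivalently \(\alpha\)) and \(K\) should produce the constant \(1/\e\). I expect the optimum to be something like \(\sup_u \tfrac1u\cdot(\cdots)\) evaluated where a factor \(u^{-1}\e^{-1/u}\)-type expression peaks. If instead the intended choice is \(m\asymp n\log n\) with the smooth structure imposed differently, the same Hall count applies with the interval \((m,m+h]\) compared against \(m\) itself.

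The main obstacle is the counting step. It requires uniform control of \(\#\{b\le h: s(b)>n/K\}\), including the error terms from the sieve over primes \(\le y\). It also requires the saddle-point estimates to hold with \(1+o(1)\) accuracy in exactly the range of \(y\) and of the ratios \(h/s\) that the optimization selects. I would first try to prove the count as a lower-order perturbation of the main term using Hildebrand--Tenenbaum's local theorem \(\Psi(x/d,y)=\Psi(x,y)d^{-\alpha}(1+O(1/u+\log d/\log x))\). After that, I would check that the optimal parameters lie inside its range of validity. If they do not, the fallback is to accept a slightly suboptimal \(u\) and recover \(1/\e\) only in the limit.
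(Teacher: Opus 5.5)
\textbf{There is a genuine gap: the choice of \(m\) makes your obstruction impossible.} If \(m\) is divisible by every \(i\in I\), then the numbers \(m+i\) (\(i\in I\)) are pairwise distinct, lie in \((m,m+n]\), and satisfy \(i\mid m+i\). So \(|\Gamma(I)|\ge|I|\) for every \(h\ge n\), and \(I\) can never violate Hall's condition at any length on the \(n\log n\) scale.

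In your own notation: every \(y\)-smooth \(b\in(n/K,n]\) has \(s(b)=b>n/K\). Hence \(\#\{b\le h:s(b)>n/K\}\ge|I|\) as soon as \(h\ge n\). Aligning \(m\) with the smooth indices is exactly backwards, since it makes them trivially matchable.

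Your sieve heuristic hides this because it is applied outside its range. For \(s>n/K\) and \(h\asymp n\log n\), the cofactor satisfies \(b/s<hK/n\ll K\log n\), which is far below \(y=n^{1/u}\). A \(y\)-rough cofactor must then equal \(1\), so the Mertens factor never appears. The count is exactly \(\Psi(h,y)-\Psi(n/K,y)\ge|I|\). Even taken formally, the \(1/\log y\) from Mertens cancels against \(1/(1-\alpha)\asymp_u\log y\), so the heuristic would give only \(h=O_u(n)\), not \(n\log y\).

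The regime \(y=n^{1/u}\) is also the wrong one. There \(\alpha\to1\) and smooth numbers have positive density, so the supply of smooth targets grows linearly with the window. Finally, the constant \(1/\e\) is only hoped for, not derived.

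\textbf{The missing idea is the paper's obstruction, which uses only the size of \(m\), not its arithmetic.} Take a \(y\)-smooth index \(i\in(E/y,n]\). Any multiple \(a_i\le E\) has cofactor \(a_i/i<y\), so \(a_i\) is itself \(y\)-smooth. Hence the window \((m,E]\) must contain at least \(\Psi(n,y)-\Psi(E/y,y)\) smooth integers. Your index set is essentially the right one: in the paper \(E/y=(b/d)n\), so \(K=d/b\).

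What makes it work is \(y=d\log n\), \(m\sim an\log n\), \(E\sim bn\log n\) with \(0<a<b<d\):
\begin{itemize}
\item Then \(\alpha\sim\log(1+d)/\log\log n\to0\).
\item By Hildebrand--Tenenbaum, both the eligible indices and the smooth integers in \((m,E]\) are of order \(\Psi(n,y)/\log\log n\).
\item Raising the height to \(n\log n\) costs only the bounded factor \(\Psi(m,y)/\Psi(n,y)\to1+d\).
\end{itemize}
The obstruction reduces to \(\log(d/b)>(1+d)\log(b/a)\). Maximizing \(b-a\) gives \(C_d=\frac{d}{d+2}\bigl(\frac{d+1}{d+2}\bigr)^{d+1}\), which tends to \(1/\e\) as \(d\to\infty\).

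Your closing sentence about \(m\asymp n\log n\) gestures in this direction. It supplies neither the cofactor-forces-smoothness step nor the choice \(y\asymp\log n\), and those two ingredients are the whole argument.
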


Since \(f(n,n)=o(n\log n)\) by \eqref{eq:EP-bounds}, an equivalent
formulation is \(F(n)\ge(1/\e-o(1))\,n\log n\): for every fixed
\(c<1/\e\) and all sufficiently large \(n\) there is an interval of
length \(c\,n\log n\) containing no system of pairwise distinct
multiples of \(1,2,\ldots,n\).  Because Theorem~\ref{thm:main} operates on the \(n\log n\)
scale, it also immediately implies a strengthening of \eqref{eq:vD} in
the normalization used by van Doorn.

\begin{corollary}\label{cor:arbitrary}
For every fixed \(A>0\), we have, for all sufficiently large \(n\),
\[
  F(n)-f(n,n)
  > \frac{A n\log n}{\log\log n}.
\]
Equivalently,
\[
  \frac{F(n)-f(n,n)}{n\log n/\log\log n}
  \longrightarrow \infty.
\]
\end{corollary}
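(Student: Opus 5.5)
The corollary is a direct consequence of Theorem~\ref{thm:main}; no new number theory is needed, only a comparison of the two normalizations. The plan is to fix \(A>0\) and write
\[
  \frac{F(n)-f(n,n)}{n\log n/\log\log n}
  = \frac{F(n)-f(n,n)}{n\log n}\cdot\log\log n .
\]
By Theorem~\ref{thm:main}, for any fixed \(c\) with \(0<c<1/\e\) (say \(c=1/(2\e)\)) there is \(n_0\) such that
\[
  F(n)-f(n,n)\ge c\,n\log n \qquad (n\ge n_0).
\]
This follows from the definition of \(\liminf\), since \(1/\e>c\). Multiplying by \(\log\log n\) shows that the ratio above is at least \(c\log\log n\) for \(n\ge n_0\).

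Since \(\log\log n\to\infty\), we have \(c\log\log n>A\) once \(n>\exp(\exp(A/c))\). Hence for all \(n\ge\max\{n_0,\lceil\exp(\exp(A/c))\rceil+1\}\),
\[
  F(n)-f(n,n)\ge c\,n\log n > \frac{A\,n\log n}{\log\log n}.
\]
We only consider \(n\ge 16\), say, so that \(\log\log n>0\) and the denominator is positive. This proves the first assertion.

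For the equivalence, note that the displayed inequality holding for every \(A>0\) and all sufficiently large \(n\) is exactly the definition of the ratio \(\frac{F(n)-f(n,n)}{n\log n/\log\log n}\) tending to \(+\infty\). The converse direction is equally immediate. There is no genuine obstacle: all the difficulty is in Theorem~\ref{thm:main}, and the only care needed is to use a constant strictly below \(1/\e\) when passing from the \(\liminf\) to a pointwise bound.
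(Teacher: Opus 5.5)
Your proof is correct and follows the paper's argument. Both fix a constant \(c<1/\e\) (the paper takes \(1/(1754\e)\), you take \(1/(2\e)\)), turn the \(\liminf\) from Theorem~\ref{thm:main} into the eventual bound \(F(n)-f(n,n)\ge c\,n\log n\), and then use that \(A/\log\log n<c\) for all large \(n\).
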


\begin{remark}[Relation to Erd\H{o}s Problem 711]\label{rem:711}
Problem~711 of Bloom's database \cite{Bloom711} records two questions
drawn from \cite{Erdos92,ErdosPomerance}: whether
\begin{equation}\label{eq:711a}
  F(n)\le n^{1+o(1)},
\end{equation}
a conjecture stated already in \cite{ErdosPomerance}, and whether
\(F(n)-f(n,n)\to\infty\).  (The problem is stated there
for the open interval \((m,m+f(n,m))\); but since all quantities involved
are integers, this changes \(f(n,m)\) by at most \(1\) and is
immaterial at the scales considered here.)  The
second question is the conjecture settled by van Doorn's result~\eqref{eq:vD}, which our Theorem~\ref{thm:main} and
Corollary~\ref{cor:arbitrary} strengthen.  The first remains open, and
Theorem~\ref{thm:main} constrains it from below: combined with the
Erd\H{o}s--Pomerance bound \(F(n)\ll n^{3/2}\), the known bounds now
read
\[
  \left(\frac{1}{\e}-o(1)\right)n\log n
  \;\le\;
  F(n)
  \;\ll\;
  n^{3/2}.
\]
In particular \(F(n)/n\to\infty\), so the conjectured bound cannot be
strengthened to \(F(n)=O(n)\).  An affirmative answer to
\eqref{eq:711a} would therefore be sharp at the level of the exponent.  At
the same time, the method of this paper is consistent with
\eqref{eq:711a}: As explained in the discussion below, in the
regimes considered in this paper the smooth-number obstruction
produces no interval longer than \(\asymp n\log n\), and so places no
barrier against the conjectured
upper bound; we return to the upper bound in
Section~\ref{sec:upper}.

We note finally that matching integers to distinct
multiples in an interval is a recurring theme among Erd\H{o}s's
problems: The closely related Problem~650 \cite{Bloom650} was resolved by van
Doorn, Li and Tang in a recent preprint \cite{vanDoornLiTang}; and
meanwhile, Problem~710 \cite{Bloom710}, which also concerns
\(f(n,n)\), to our knowledge remains open.
\end{remark}

\subsection*{The method, and comparison with diagonal transfer}

The van Doorn proof of \eqref{eq:vD} rests on the elegant diagonal-transfer
inequality
\begin{equation}\label{eq:vanDoorn-ineq}
  kn+f(kn,kn)
  \le
  k^2n+f(n,k^2n)
  \qquad (k,n\in\N)
\end{equation}
\cite[Lemma~2]{vanDoorn}: an admissible system for the indices
\(1,\ldots,n\) in the interval \((k^2n,\,k^2n+f(n,k^2n)]\) extends to
one for the indices \(1,\ldots,kn\) in \((kn,\,k^2n+f(n,k^2n)]\) after
adjoining the multiples \(ki\in(kn,k^2n]\) for \(n<i\le kn\), and the
term \(k^2n\) is the price of bridging the gap between the two
intervals.  The inequality transfers
lower bounds for the diagonal quantity \(f(N,N)\), taken at the larger
point \(N=kn\), into lower bounds for \(F(n)\).  The strength of the
output is therefore governed by the diagonal problem, and diagonal
intervals are genuinely short: \(f(N,N)=O(N\sqrt{\log N})=o(N\log N)\)
by \eqref{eq:EP-bounds}.  Inserting the lower bound of
\eqref{eq:EP-bounds} into \eqref{eq:vanDoorn-ineq} and optimizing over
\(k\) yields \(F(n)-f(n,n)\ge(1/\e-o(1))\,n\log n/\log\log n\), and no
choice of \(k\) extracts more from the transfer as long as
\eqref{eq:EP-bounds} represents the state of knowledge about
\(f(N,N)\); we record this computation in Section~\ref{sec:transfer}.
The \(\log\log n\) in \eqref{eq:vD} is thus inherited from the
diagonal.

The present note reaches the \(n\log n\) scale by leaving the
diagonal. We exhibit explicit starting points
\(m\sim a\,n\log n\) at which \(f(n,m)\) is seen directly to be
large.  The obstruction we use is due to Erd\H{o}s and Pomerance:  Choose
a smoothness threshold \(y=d\log n\) with \(d>0\) fixed, and suppose
that \(1,\ldots,n\) had pairwise distinct multiples
\(a_1,\ldots,a_n\) in an interval \((m,E]\) with \(E\le ny\).  For
every \(y\)-smooth index \(i\in(E/y,n]\), the cofactor \(a_i/i\) is a
positive integer smaller than \(y\), hence it is \(y\)-smooth, and therefore
\(a_i\) is itself \(y\)-smooth.  Consequently \((m,E]\) must contain
at least as many \(y\)-smooth integers as there are \(y\)-smooth
indices in \((E/y,n]\); when it does not, no admissible system exists
and \(f(n,m)>E-m\), as we show in Lemma~\ref{lem:obstruction}.  For \(m=n\) this is
precisely Lemma~1 of \cite{ErdosPomerance}, the engine behind the
diagonal lower bound in \eqref{eq:EP-bounds}---but the freedom in the left
endpoint \(m\) is key to our argument.  In
Section~\ref{sec:diagonal} we confirm that
Lemma~\ref{lem:obstruction} together with the estimates of
Section~\ref{sec:local} recovers that diagonal bound, constant
included.

What makes this comparison favorable at height \(n\log n\) is the
choice \(y\asymp\log n\).  In this regime, the local estimates of
Hildebrand and Tenenbaum (Section~\ref{sec:local}) show that
\(\Psi(x,y)\), the number of \(y\)-smooth integers up to \(x\), behaves
locally like \(x^{\alpha}\) with exponent
\(\alpha\approx\log(1+d)/\log\log n\to0\).  Two consequences drive the
proof.  First, \(\Psi(\cdot,y)\) is nearly invariant under bounded
dilations, so the two counts being compared are of the same
order: each equals \((c_i+o(1))\,\Psi(n,y)/\log\log n\) for explicit
constants \(c_i=c_i(d,a,b)\).  Secondly---and this is the crucial
point---raising the height of the interval from \(n\) to
\(m\sim a\,n\log n\) increases the supply of smooth numbers only by
the bounded factor \((m/n)^{\alpha}\to1+d\) (see
\eqref{eq:m-over-n}); an interval at height \(n\log n\) is essentially
no richer in \(y\)-smooth numbers than a proportionally scaled
interval at height \(n\).  The pigeonhole therefore reduces to an
inequality between constants---condition
\eqref{eq:parameter-condition} on the parameters \((d,a,b)\) in the sequel---which we
optimize in Section~\ref{sec:optimization}.  Already in
\cite{ErdosPomerance} it is remarked that pushing the obstruction
further would require sharp estimates for \(\Psi(x,y)\) with \(y\)
tending to infinity slowly; the saddle-point estimates of Hildebrand
and Tenenbaum \cite{HildebrandTenenbaum}, which appeared six years
later, supply exactly the local control needed here (see also \cite{HildebrandTenenbaumSurvey}).

The \(n\log n\) scale appears to be intrinsic to the method.  One
constraint is unconditional: the hypothesis of
Lemma~\ref{lem:obstruction} forces \(E<ny\) (see its proof), so an
obstruction with threshold \(y\) certifies no interval longer than
\(ny\), and so with \(y\asymp\log n\) no interval longer than
\(\asymp n\log n\) can be obstructed.  Heuristically, raising \(y\) to
a larger power of \(\log n\) does not help: the exponent \(\alpha\)
then stays bounded away from \(0\), and an interval at height
\(n\log n\) already contains more \(y\)-smooth integers than there are
\(y\)-smooth indices below \(n\) by a positive power of \(\log n\);
hence, the smooth targets are too numerous for the counting obstruction
in Lemma~\ref{lem:obstruction} to apply.  Within the three-parameter family optimized in
Section~\ref{sec:optimization}, the constant \(1/\e\) is best possible
as well (Remark~\ref{rem:sup}).

\subsection*{Organization of the paper}

The paper is organized as follows.  Section~\ref{sec:obstruction}
isolates the obstruction.  Section~\ref{sec:local} records the
required estimates for smooth numbers.  Section~\ref{sec:construction}
carries out the construction for a general triple of parameters and
gives a fully explicit example.  Section~\ref{sec:optimization}
optimizes the parameters and proves Theorem~\ref{thm:main} and
Corollary~\ref{cor:arbitrary}.
Section~\ref{sec:remarks} compares the construction with van Doorn's
transfer method and revisits the diagonal case \(m=n\), recovering the
Erd\H{o}s--Pomerance lower bound within the present framework; it then
reformulates the upper-bound problem through Hall's theorem, proves
the conjectured \(n^{1+o(1)}\) upper bound at every polynomial height,
and closes with a question.

\section{A smooth-number obstruction}\label{sec:obstruction}

This section isolates the counting inequality on which all of our
lower bounds rest; it reduces the task of proving \(f(n,m)>E-m\) to a
comparison between two smooth-number counts.  For a positive integer
\(r\), let \(\Pplus(r)\) denote its largest prime
factor, with the convention \(\Pplus(1)=1\).  For real \(x\ge 0\) and
\(y\ge 1\), put
\[
  \Psi(x,y)
  := \#\{r\in\N:r\le x,\ \Pplus(r)\le y\}.
\]
Thus \(\Psi(x,y)\) counts the positive integers up to \(x\) which are
\(y\)-smooth.  For \(m=n\), the following lemma is essentially Lemma~1 of
Erd\H{o}s and Pomerance \cite{ErdosPomerance}, with the same proof;
the only change is that the left endpoint \(m\) of the target interval
is now taken to be a free parameter.

\begin{lemma}[Smooth-number obstruction]\label{lem:obstruction}
Let \(n,m\in\N\), let \(y>1\), and let \(E>m\) be real.  If
\begin{equation}\label{eq:obstruction-condition}
  \Psi(n,y)-\Psi(E/y,y)
  >
  \Psi(E,y)-\Psi(m,y),
\end{equation}
then we have
\[
  f(n,m)>E-m.
\]
\end{lemma}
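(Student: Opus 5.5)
We argue by contraposition. Suppose \(f(n,m)\le E-m\). Then there are pairwise distinct integers \(a_1,\ldots,a_n\) in \((m,m+f(n,m)]\subseteq(m,E]\) with \(i\mid a_i\) for every \(i\). We will show that this forces
\[
  \Psi(n,y)-\Psi(E/y,y)\le\Psi(E,y)-\Psi(m,y),
\]
which contradicts \eqref{eq:obstruction-condition}.

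\textbf{Set-up.} Let \(I\) be the set of \(y\)-smooth integers \(i\) with \(E/y<i\le n\). If \(E/y\ge n\), then \(I\) is empty. In that case the left side of \eqref{eq:obstruction-condition} is \(\Psi(n,y)-\Psi(E/y,y)\le0\), while the right side is \(\ge0\) because \(E>m\). So the hypothesis already fails, and the implication holds vacuously. This incidentally shows that the hypothesis forces \(E<ny\), as the introduction remarks. In general, since \(\Psi(\cdot,y)\) is monotone,
\[
  \#I=\Psi(n,y)-\Psi(E/y,y)
\]
exactly, because \(\Psi(n,y)\) and \(\Psi(E/y,y)\) count the \(y\)-smooth integers in \([1,n]\) and \([1,E/y]\) respectively, and \(E/y<n\) in the nonvacuous case.

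\textbf{Key step.} For each \(i\in I\), write \(a_i=i\,q_i\) with \(q_i\in\N\). Since \(a_i\le E\) and \(i>E/y\), we get \(q_i=a_i/i<E/(E/y)=y\). Every prime factor of \(q_i\) is at most \(q_i<y\), and every prime factor of \(i\) is at most \(y\) by the choice of \(I\). Hence \(\Pplus(a_i)\le y\), so \(a_i\) is a \(y\)-smooth integer in \((m,E]\).

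\textbf{Counting.} The map \(i\mapsto a_i\) is injective because the \(a_i\) are pairwise distinct. It therefore embeds \(I\) into the set of \(y\)-smooth integers in \((m,E]\). That set has exactly \(\Psi(E,y)-\Psi(m,y)\) elements; here we use that \(m\) is an integer and \(\Psi(x,y)\) counts integers \(r\le x\). Thus \(\#I\le\Psi(E,y)-\Psi(m,y)\), which is the desired contradiction.

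There is no serious obstacle. The only point needing care is the bookkeeping at the endpoints: checking that \(\Psi(n,y)-\Psi(E/y,y)\) counts exactly the \(y\)-smooth integers in \((E/y,n]\) when \(E/y\) is not an integer, and handling the degenerate case \(E/y\ge n\) separately as above.
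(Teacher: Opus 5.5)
Your proposal is correct and follows the paper's proof essentially step for step. The paper likewise first observes that the hypothesis forces \(E/y<n\). It then notes that each \(y\)-smooth index \(i\in(E/y,n]\) has cofactor \(a_i/i<y\), so the distinct \(a_i\) inject into the \(y\)-smooth integers of \((m,E]\), contradicting the assumed inequality.
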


\begin{proof}
Since \(\Psi(\cdot,y)\) is nondecreasing and the right-hand side of
\eqref{eq:obstruction-condition} is nonnegative, the hypothesis forces
\(\Psi(n,y)>\Psi(E/y,y)\), and in particular \(E/y<n\).  Let
\[
  \mathcal I
  := \{i\in\N:E/y<i\le n,\ \Pplus(i)\le y\},
\]
so that
\[
  |\mathcal I|=\Psi(n,y)-\Psi(E/y,y).
\]
Suppose, for the sake of seeking a contradiction, that \(f(n,m)\le E-m\).  Then there are
pairwise distinct integers \(a_1,\ldots,a_n\in(m,E]\) such that
\(i\mid a_i\) for \(1\le i\le n\).  For \(i\in\mathcal I\), write
\[
  a_i=i q_i, \qquad q_i\in\N.
\]
Since \(a_i\le E\) and \(i>E/y\), we have
\[
  q_i=\frac{a_i}{i}<y.
\]
Therefore \(q_i\) is \(y\)-smooth.  Also \(i\) is \(y\)-smooth by the
definition of \(\mathcal I\), and hence \(a_i=iq_i\) is \(y\)-smooth.
It follows that the distinct integers \(a_i\), with \(i\in\mathcal I\),
all lie in the set
\[
  \{a\in\N:m<a\le E,\ \Pplus(a)\le y\},
\]
which has cardinality \(\Psi(E,y)-\Psi(m,y)\).  Thus
\[
  |\mathcal I|\le \Psi(E,y)-\Psi(m,y),
\]
contrary to \eqref{eq:obstruction-condition}.  Hence no such system of
distinct multiples lies in \((m,E]\), and so we conclude that \(f(n,m)>E-m\).
\end{proof}

The left-hand side of \eqref{eq:obstruction-condition} counts the
smooth indices whose multiples are forced to be smooth, while the
right-hand side counts the smooth landing spots available to them in
the target interval.  A similar interplay between interval matchings
and smooth numbers appears in work on Grimm's conjecture on distinct
prime factors of consecutive integers; see \cite{RST,LaishramMurty}.

The
remainder of the paper consists of choosing
\(m\), \(E\) and \(y\) so that the supply of smooth
landing spots falls short of the demand from the smooth indices; the
estimates needed to compare the two sides are collected in the next
section.

\section{Local estimates for smooth numbers}\label{sec:local}

The two sides of \eqref{eq:obstruction-condition} will be compared by
means of two estimates of Hildebrand and Tenenbaum~\cite{HildebrandTenenbaum}, which describe the local behavior of the
counting function \(\Psi(x,y)\); for an exposition of the underlying
method see also \cite[Chapter~III.5]{Tenenbaum} as well as the survey~\cite{HildebrandTenenbaumSurvey}.  For
\(x\ge y\ge 2\), let \(\alpha(x,y)\)
be the saddle-point parameter, i.e., the unique positive solution
\(\alpha\) of
\begin{equation}\label{eq:alpha-definition}
  \sum_{p\le y}\frac{\log p}{p^{\alpha}-1}=\log x;
\end{equation}
the left-hand side decreases strictly from \(\infty\) to \(0\) as
\(\alpha\) runs over \((0,\infty)\), so \(\alpha(x,y)\) is well
defined.  Uniformly for \(x\ge y\ge 2\) and \(1\le c\le y\), we have
\cite[Theorem~3]{HildebrandTenenbaum}
\begin{equation}\label{eq:HT-ratio}
  \Psi(cx,y)
  =
  \Psi(x,y)c^{\alpha(x,y)}
  \left(
    1+O\left(\frac{1}{u}+\frac{\log y}{y}\right)
  \right),
  \qquad
  u=\frac{\log x}{\log y},
\end{equation}
and, uniformly for \(x\ge y\ge2\)
\cite[Theorem~2(i)]{HildebrandTenenbaum},
\begin{equation}\label{eq:alpha-estimate}
  \alpha(x,y)
  =
  \frac{\log(1+y/\log x)}{\log y}
  \left(
    1+O\left(\frac{\log\log(1+y)}{\log y}\right)
  \right).
\end{equation}

We record the specialization that we use throughout our argument.
Set
\[
  L:=\log n,
  \qquad
  \ell:=\log\log n.
\]

\begin{lemma}\label{lem:alpha-specialization}
Fix \(d>0\), and put \(y=dL\).  Suppose that \(x=x(n)\) satisfies
\[
  c_1 n\le x\le c_2 nL
\]
for some fixed constants \(0<c_1<c_2<\infty\).  Then
\begin{equation}\label{eq:alpha-specialized}
  \alpha(x,y)
  =
  \frac{\log(1+d)}{\ell}
  +o\left(\frac{1}{\ell}\right).
\end{equation}
Furthermore, for these \(x\) and \(y\), uniformly in \(1\le c\le y\),
the relative error in \eqref{eq:HT-ratio} is
\begin{equation}\label{eq:ratio-error}
  O\left(\frac{\ell}{L}\right)
  =o\left(\frac{1}{\ell}\right).
\end{equation}
The estimates are uniform when \(d,c_1,c_2\) range over fixed compact
subsets of \((0,\infty)\), with \(c_1<c_2\).
\end{lemma}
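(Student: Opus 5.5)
The plan is to substitute \(y=dL\) and \(\log x\) directly into the Hildebrand--Tenenbaum estimate \eqref{eq:alpha-estimate} and track each factor. Note first that \(x\ge y\ge 2\) holds for large \(n\), since \(x\ge c_1 n\) while \(y=dL\). From \(c_1n\le x\le c_2nL\) we get
\[
  \log x = L + O(\ell) = L\bigl(1+O(\ell/L)\bigr),
\]
uniformly for \(c_1,c_2\) in compacts. Hence
\[
  \frac{y}{\log x}=d\bigl(1+O(\ell/L)\bigr),
\]
and since \(\log(1+t)\) is Lipschitz on compact subsets of \((0,\infty)\), this gives \(\log(1+y/\log x)=\log(1+d)+O(\ell/L)\), uniformly for \(d\) in a compact set.

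Next, the denominator is \(\log y=\log d+\ell=\ell\bigl(1+O(1/\ell)\bigr)\), so \(1/\log y=1/\ell+O(1/\ell^2)\). The error factor in \eqref{eq:alpha-estimate} is
\[
  1+O\bigl(\log\log(1+y)/\log y\bigr)=1+O(\log\ell/\ell)=1+o(1).
\]
Multiplying the three pieces gives
\[
  \alpha(x,y)=\frac{\log(1+d)+O(\ell/L)}{\ell}\,\bigl(1+O(1/\ell)\bigr)\bigl(1+O(\log\ell/\ell)\bigr)
  =\frac{\log(1+d)}{\ell}+O\!\left(\frac{\log\ell}{\ell^2}\right),
\]
which is \eqref{eq:alpha-specialized}. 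Here \(\log(1+d)\) is bounded above and below on compacts, so every \(O\)-constant is uniform.

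For \eqref{eq:ratio-error}, note that the error in \eqref{eq:HT-ratio} is \(O(1/u+\log y/y)\). We have \(u=\log x/\log y\asymp L/\ell\), so \(1/u\asymp \ell/L\). Also \(\log y/y=(\ell+\log d)/(dL)=O(\ell/L)\). Both terms are therefore \(O(\ell/L)\). Finally, \(\ell/L=o(1/\ell)\) because \(\ell^2/L=(\log\log n)^2/\log n\to0\). The uniformity in \(1\le c\le y\) is inherited directly from \eqref{eq:HT-ratio}, because the error term there does not depend on \(c\).

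No step is a real obstacle; the argument is bookkeeping. The only point needing care is the uniformity claim: every implied constant must depend only on the compact ranges of \(d,c_1,c_2\). This holds because \(\log d\), \(\log c_1\), \(\log c_2\) and \(1/d\) are all bounded there.
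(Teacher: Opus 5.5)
Your proposal is correct and is essentially the paper's argument: substitute \(\log x=L+O(\ell)\), \(\log y=\ell+O(1)\) and \(y/\log x\to d\) into \eqref{eq:alpha-estimate}, then bound \(1/u+\log y/y\ll \ell/L=o(1/\ell)\) using \(u\asymp L/\ell\) and \(\ell^2=o(L)\). Your bookkeeping is slightly sharper than the paper's: you track \(y/\log x=d\bigl(1+O(\ell/L)\bigr)\) and get the explicit error \(O(\log\ell/\ell^2)\) where the paper writes \(o(1/\ell)\), but the route is the same.
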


\begin{proof}
In the indicated range for \(x\) we have, for all sufficiently large
\(n\), the inequalities \(x\ge y\ge 2\), together with
\[
  \log x=L+O(\ell),
  \qquad
  \log y=\ell+O(1),
\]
and
\[
  \frac{y}{\log x}
  =\frac{dL}{L+O(\ell)}
  =d+o(1).
\]
Substituting these estimates into \eqref{eq:alpha-estimate} gives
\[
  \alpha(x,y)
  =
  \frac{\log(1+d)+o(1)}{\ell+O(1)}
  \left(1+O\left(\frac{\log\ell}{\ell}\right)\right)
  =
  \left(\frac{\log(1+d)}{\ell}+o\left(\frac{1}{\ell}\right)\right)
  \bigl(1+o(1)\bigr),
\]
which is \eqref{eq:alpha-specialized}.  Also
\[
  u=\frac{\log x}{\log y}\asymp \frac{L}{\ell},
\]
so we have
\[
  \frac{1}{u}+\frac{\log y}{y}
  \ll
  \frac{\ell}{L}+\frac{\ell}{L}
  \ll \frac{\ell}{L}.
\]
Finally \(\ell/L=o(1/\ell)\), because \(\ell^2=o(L)\); this proves
\eqref{eq:ratio-error}.
\end{proof}

The point of \eqref{eq:ratio-error} is that the smooth-number
differences appearing in our analysis have relative size \(\asymp1/\ell\), while
the error in the ratio estimate is of smaller order.  We shall also
use repeatedly, without further comment, that if \(t=t(n)=O(1/\ell)\)
then
\begin{equation}\label{eq:exp-expansion}
  \e^{\pm t}=1\pm t+O(t^2)=1\pm t+o\left(\frac{1}{\ell}\right).
\end{equation}

\section{A three-parameter construction}\label{sec:construction}

With the estimates of Section~\ref{sec:local} in hand, we now apply
the obstruction lemma to a family of intervals described by three
parameters: \(d\) fixes the smoothness threshold \(y=dL\),
while \(a\) and \(b\) locate the interval, which runs from
\(m\approx anL\) to \(E\approx bnL\).  Condition
\eqref{eq:parameter-condition} below expresses that the eligible
smooth indices outnumber the available smooth targets.

\begin{proposition}\label{prop:parameters}
Fix real constants \(d\), \(a\), and \(b\) satisfying
\[
  d>0,
  \qquad
  0<a<b<d,
\]
and
\begin{equation}\label{eq:parameter-condition}
  \log\left(\frac{d}{b}\right)
  >
  (1+d)\log\left(\frac{b}{a}\right).
\end{equation}
Then, with
\[
  m=\left\lfloor an\log n\right\rfloor,
\]
we have
\[
  f(n,m)
  \ge
  \bigl(b-a+o(1)\bigr)n\log n.
\]
\end{proposition}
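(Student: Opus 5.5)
We apply Lemma~\ref{lem:obstruction} with \(y=dL\), \(m=\lfloor anL\rfloor\) and \(E=b'nL\) for a fixed \(b'<b\) close to \(b\). The condition \eqref{eq:parameter-condition} is strict and its two sides are continuous in \(b\), so it still holds with \(b'\) in place of \(b\); also \(a<b'<d\). Once we verify \eqref{eq:obstruction-condition}, the lemma gives \(f(n,m)>E-m\ge(b'-a)nL\). Letting \(b'\uparrow b\) along a sequence then gives \(f(n,m)\ge(b-a+o(1))n\log n\). It therefore suffices to prove the proposition with \(E=bnL\) (small perturbations absorbed), showing that both sides of \eqref{eq:obstruction-condition} are asymptotic to explicit constants times \(\Psi(n,y)/\ell\).

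\textbf{Left side.} Since \(E/y=(b/d)n\) with \(b/d<1\), we have \(n=(d/b)\cdot(E/y)\). We apply \eqref{eq:HT-ratio} with \(x=E/y\asymp n\) and \(c=d/b\in[1,y]\). Lemma~\ref{lem:alpha-specialization} gives \(\alpha=\log(1+d)/\ell+o(1/\ell)\) with relative error \(o(1/\ell)\). Hence
\[
\Psi(n,y)-\Psi(E/y,y)=\Psi(n,y)\Bigl(1-(b/d)^{\alpha}\Bigr)\bigl(1+o(1)\bigr)=\Psi(n,y)\,\frac{\log(1+d)\log(d/b)+o(1)}{\ell},
\]
using \eqref{eq:exp-expansion}. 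The point to watch is that the \(O(\ell/L)\) ratio error is \(o(1/\ell)\) relative to \(\Psi(n,y)\), so it does not swamp the difference, which is of relative size \(\asymp1/\ell\).

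\textbf{Right side.} First compare \(\Psi(m,y)\) with \(\Psi(n,y)\). Write \(m=c\,n\) with \(c\approx aL\le y\) for large \(n\), provided \(a<d\), which holds. Apply \eqref{eq:HT-ratio} at \(x=n\): this gives \(\Psi(m,y)=\Psi(n,y)\,m^{\alpha_0}n^{-\alpha_0}(1+o(1/\ell))\), where \(\alpha_0=\alpha(n,y)\). Here \((m/n)^{\alpha_0}=\exp(\alpha_0(\ell+O(1)))=1+d+o(1)\). This is the bounded dilation factor \((m/n)^\alpha\to 1+d\) from the introduction, and it is the step I expect to be the main obstacle. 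The exponent \(\alpha_0\ell\) is only known to be \(\log(1+d)+o(1)\), so we cannot expand \(\Psi(m,y)\) itself to accuracy \(1/\ell\). We avoid needing this by never expanding \(\Psi(m,y)\) to that accuracy. Instead we write the difference \(\Psi(E,y)-\Psi(m,y)\) as \(\Psi(m,y)\bigl((E/m)^{\alpha_1}-1\bigr)(1+o(1))\), applying \eqref{eq:HT-ratio} at \(x=m\asymp nL\), which Lemma~\ref{lem:alpha-specialization} permits, with \(c=E/m\to b/a\) and \(\alpha_1=\alpha(m,y)\). This gives
\[
\Psi(E,y)-\Psi(m,y)=\Psi(m,y)\,\frac{\log(1+d)\log(b/a)+o(1)}{\ell}=\Psi(n,y)\,\frac{(1+d)\log(1+d)\log(b/a)+o(1)}{\ell}.
\]
In this expression only the leading constant of \(\Psi(m,y)/\Psi(n,y)\) is needed, since it multiplies something already of size \(1/\ell\).

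Comparing the two displays, \eqref{eq:obstruction-condition} holds for large \(n\) exactly when \(\log(d/b)>(1+d)\log(b/a)\), after dividing by \(\log(1+d)>0\). This is \eqref{eq:parameter-condition}. The floor in \(m\) and the replacement of \(b\) by \(b'\) change the constants by \(o(1)\) and so are harmless by the strictness of the condition.
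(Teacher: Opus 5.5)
Your proposal is correct and follows the paper's proof. You use the same three applications of \eqref{eq:HT-ratio} at the base points \(E/y\), \(m\), \(n\), and the same observation that only the leading constant \(1+d\) of \(\Psi(m,y)/\Psi(n,y)\) is needed, because it multiplies a difference already of relative size \(1/\ell\). The perturbation \(b'<b\) is harmless but unnecessary: Lemma~\ref{lem:obstruction} allows real \(E\), so taking \(E=bnL\) directly gives \(E-m\ge(b-a)nL\) (the paper uses \(E=\lfloor bnL\rfloor\)).
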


\begin{proof}
Write
\[
  L=\log n,
  \qquad
  \ell=\log\log n,
  \qquad
  y=dL,
\]
and set
\[
  m=\lfloor anL\rfloor,
  \qquad
  E=\lfloor bnL\rfloor.
\]
As \(0<a<b<d\), for all sufficiently large \(n\) we have \(m<E\),
\(E/y<n\), and \(m/n<y\).  We shall verify the inequality
\eqref{eq:obstruction-condition} and then apply
Lemma~\ref{lem:obstruction}.  In each of the three applications of
\eqref{eq:HT-ratio} in the sequel, the pair \((x,c)\) will be
\((E/y,\,ny/E)\), \((m,\,E/m)\), or \((n,\,m/n)\).  Every base point
satisfies \(\tfrac{b}{2d}\,n\le x\le 2b\,nL\) for all sufficiently
large \(n\), so Lemma~\ref{lem:alpha-specialization} applies to the
three applications simultaneously, with the common fixed constants
\(c_1=\tfrac{b}{2d}\) and \(c_2=2b\); and every dilation factor
satisfies \(1<c<y\), the largest being \(m/n=aL+O(1)<dL=y\).

Put
\[
  \lambda:=\log(1+d).
\]

\smallskip
\noindent\textit{Eligible indices.}
Apply \eqref{eq:HT-ratio} with
\[
  x=\frac{E}{y},
  \qquad
  c=\frac{ny}{E}.
\]
Here \(x\asymp n\), and
\[
  c=\frac{ny}{E}=\frac{d}{b}+o(1)\in(1,y).
\]
By Lemma~\ref{lem:alpha-specialization},
\[
  \alpha(E/y,y)=\frac{\lambda}{\ell}+o\left(\frac{1}{\ell}\right),
\]
and the relative error in \eqref{eq:HT-ratio} is \(o(1/\ell)\).  Thus
\[
  \Psi(n,y)
  =
  \Psi(E/y,y)c^{\alpha(E/y,y)}
  \left(1+o\left(\frac{1}{\ell}\right)\right),
\]
or equivalently
\[
  \frac{\Psi(E/y,y)}{\Psi(n,y)}
  =
  c^{-\alpha(E/y,y)}
  \left(1+o\left(\frac{1}{\ell}\right)\right).
\]
Since \(\log c=\log(d/b)+o(1)\), we get
\(\alpha(E/y,y)\log c=\lambda\log(d/b)/\ell+o(1/\ell)\), whence by
\eqref{eq:exp-expansion} we have
\[
  c^{-\alpha(E/y,y)}
  =
  1-\frac{\lambda\log(d/b)}{\ell}
  +o\left(\frac{1}{\ell}\right).
\]
Therefore
\begin{equation}\label{eq:left-difference}
  \Psi(n,y)-\Psi(E/y,y)
  =
  \left(
    \frac{\lambda\log(d/b)}{\ell}
    +o\left(\frac{1}{\ell}\right)
  \right)\Psi(n,y).
\end{equation}

\smallskip
\noindent\textit{Available smooth integers.}
Now, we apply \eqref{eq:HT-ratio} with
\[
  x=m,
  \qquad
  c=\frac{E}{m}.
\]
Here \(x\asymp nL\), and
\[
  c=\frac{b}{a}+o(1)\in(1,y).
\]
Lemma~\ref{lem:alpha-specialization} gives
\[
  \alpha(m,y)=\frac{\lambda}{\ell}+o\left(\frac{1}{\ell}\right),
\]
and hence, by \eqref{eq:exp-expansion} again, we have
\begin{align}
  \Psi(E,y)-\Psi(m,y)
  &=
  \Psi(m,y)
  \left(
    c^{\alpha(m,y)}
    -1
    +o\left(\frac{1}{\ell}\right)
  \right) \notag\\
  &=
  \left(
    \frac{\lambda\log(b/a)}{\ell}
    +o\left(\frac{1}{\ell}\right)
  \right)\Psi(m,y).
  \label{eq:right-first}
\end{align}

\smallskip
\noindent\textit{Packing comparison.} It remains to compare \(\Psi(m,y)\) with \(\Psi(n,y)\).  For this, we apply
\eqref{eq:HT-ratio} with
\[
  x=n,
  \qquad
  c=\frac{m}{n};
\]
now, \(c=aL+o(1)\), and the inequality \(a<d\) gives \(1<c<y=dL\)
for all sufficiently large \(n\).  Hence \eqref{eq:HT-ratio} gives
\[
  \frac{\Psi(m,y)}{\Psi(n,y)}
  =
  \left(\frac{m}{n}\right)^{\alpha(n,y)}
  \left(1+o\left(\frac{1}{\ell}\right)\right).
\]
Furthermore, we have
\[
  \log\left(\frac{m}{n}\right)=\ell+\log a+o(1)
\]
and
\[
  \alpha(n,y)=\frac{\lambda}{\ell}+o\left(\frac{1}{\ell}\right),
\]
so
\[
  \alpha(n,y)\log\left(\frac{m}{n}\right)=\lambda+o(1).
\]
Consequently,
\begin{equation}\label{eq:m-over-n}
  \frac{\Psi(m,y)}{\Psi(n,y)}
  =
  \e^{\lambda}+o(1)
  =
  1+d+o(1).
\end{equation}
Combining \eqref{eq:right-first} and \eqref{eq:m-over-n}, we obtain
\begin{equation}\label{eq:right-difference}
  \Psi(E,y)-\Psi(m,y)
  =
  \left(
    \frac{(1+d)\lambda\log(b/a)}{\ell}
    +o\left(\frac{1}{\ell}\right)
  \right)\Psi(n,y).
\end{equation}

\smallskip
\noindent\textit{Conclusion.}
Subtracting \eqref{eq:right-difference} from \eqref{eq:left-difference}
gives
\begin{align*}
  &\Psi(n,y)-\Psi(E/y,y)-\bigl(\Psi(E,y)-\Psi(m,y)\bigr) \\
  &\qquad =
  \frac{\lambda\Psi(n,y)}{\ell}
  \left(
    \log\left(\frac{d}{b}\right)
    -(1+d)\log\left(\frac{b}{a}\right)
    +o(1)
  \right);
\end{align*}
by \eqref{eq:parameter-condition}, the right-hand side is positive for
all sufficiently large \(n\).  Thus the obstruction condition~\eqref{eq:obstruction-condition} holds, and Lemma~\ref{lem:obstruction}
gives
\[
  f(n,m)>E-m.
\]
Since
\[
  E-m=(b-a)nL+O(1),
\]
the proposition follows.
\end{proof}

Proposition~\ref{prop:parameters} leaves the triple \((d,a,b)\) free.
Before optimizing it in Section~\ref{sec:optimization}, we record a
simple admissible choice, which already produces intervals on the
\(n\log n\) scale.

\begin{corollary}\label{cor:explicit}
If we take
\[
  m=\left\lfloor \frac{2}{5} n\log n\right\rfloor,
\]
then
\[
  f(n,m)
  \ge
  \left(\frac{1}{10}+o(1)\right)n\log n.
\]
Consequently,
\[
  F(n)-f(n,n)
  \ge
  \left(\frac{1}{10}-o(1)\right)n\log n.
\]
\end{corollary}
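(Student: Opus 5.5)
Corollary~\ref{cor:explicit} should follow directly from Proposition~\ref{prop:parameters} once we exhibit an admissible triple \((d,a,b)\) with \(a=2/5\) and \(b-a=1/10\), so \(b=1/2\). It then remains to choose \(d>b=1/2\) for which \eqref{eq:parameter-condition} holds. With \(b/a=5/4\), the condition reads
\[
  \log(2d) > (1+d)\log(5/4).
\]
I will try \(d=2\). The left side is \(\log 4\approx1.386\). The right side is \(3\log(1.25)\approx 3(0.2231)=0.669\). So the inequality holds with room to spare.

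To make the verification rigorous without relying on decimals, I will rewrite the condition as \(4>(5/4)^3=125/64\), which is true since \(256>125\). The constraints \(0<a<b<d\), namely \(2/5<1/2<2\), hold trivially. Proposition~\ref{prop:parameters} then gives, with \(m=\lfloor \tfrac25 n\log n\rfloor\),
\[
  f(n,m)\ge\bigl(\tfrac{1}{10}+o(1)\bigr)n\log n.
\]

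For the consequence, I will use \(F(n)\ge f(n,m)\), which holds by the definition of \(F\) as a maximum over \(m\in\N\); note that \(m\ge1\) for large \(n\). I will also use \(f(n,n)=o(n\log n)\), which follows from the upper bound in \eqref{eq:EP-bounds}, since \(n\sqrt{\log n}=o(n\log n)\). Subtracting gives \(F(n)-f(n,n)\ge(1/10-o(1))\,n\log n\).

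I do not expect a real obstacle. The only step needing care is the exact check of \eqref{eq:parameter-condition}, and exponentiating it to \(256>125\) avoids any numerical approximation.
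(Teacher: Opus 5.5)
Your proof is correct and follows the paper's route: apply Proposition~\ref{prop:parameters} with \(a=2/5\) and \(b=1/2\), then use \(f(n,n)=o(n\log n)\) from \eqref{eq:EP-bounds}. The only difference is the choice of threshold parameter: the paper takes \(d=1\), which reduces \eqref{eq:parameter-condition} to \(2>25/16\), whereas your \(d=2\) reduces it to \(4>125/64\), and both choices are admissible.
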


\begin{proof}
In Proposition~\ref{prop:parameters}, take
\[
  d=1,
  \qquad
  a=\frac{2}{5},
  \qquad
  b=\frac{1}{2}.
\]
The parameter condition \eqref{eq:parameter-condition} becomes
\[
  \log 2>2\log\left(\frac{5}{4}\right),
\]
which holds because
\[
  2>\left(\frac{5}{4}\right)^2=\frac{25}{16}.
\]
Thus Proposition~\ref{prop:parameters} gives the asserted lower bound
for \(f(n,m)\).  By the upper bound in \eqref{eq:EP-bounds} we have
\(f(n,n)=O(n\sqrt{\log n})=o(n\log n)\), and the lower bound for
\(F(n)-f(n,n)\) follows.
\end{proof}

Corollary~\ref{cor:explicit} already implies
Corollary~\ref{cor:arbitrary}; the optimization in the next section is
needed only for the constant \(1/\e\) in Theorem~\ref{thm:main}.

\section{Optimizing the parameters}\label{sec:optimization}

It remains to choose the parameters in
Proposition~\ref{prop:parameters} optimally; we begin by determining
the admissible range of \(a\).  For fixed \(d\) and \(b\) with
\(0<b<d\), condition
\eqref{eq:parameter-condition} is equivalent to
\[
  \log\left(\frac{b}{a}\right)
  <
  \frac{1}{1+d}\log\left(\frac{d}{b}\right),
\]
that is, to
\begin{equation}\label{eq:a-boundary}
  a>a_0(d,b):=
  b\left(\frac{b}{d}\right)^{1/(1+d)}.
\end{equation}
Note that \(0<a_0(d,b)<b\).  Hence, for fixed \(d\) and \(b\), the
supremum of the interval coefficients \(b-a\) obtainable from
Proposition~\ref{prop:parameters} is
\[
  b\left(1-\left(\frac{b}{d}\right)^{1/(1+d)}\right).
\]
Write
\[
  r=\frac{b}{d},
  \qquad
  0<r<1;
\]
then the supremal coefficient is
\[
  d r\left(1-r^{1/(1+d)}\right).
\]

\begin{lemma}\label{lem:optimization}
For fixed \(d>0\),
\[
  \max_{0<r<1}\,
  d r\left(1-r^{1/(1+d)}\right)
  = C_d,
\]
where
\begin{equation}\label{eq:Cd}
  C_d
  :=
  \frac{d}{d+2}
  \left(\frac{d+1}{d+2}\right)^{d+1},
\end{equation}
the maximum being attained at
\(r=\bigl((d+1)/(d+2)\bigr)^{d+1}\).  Moreover,
\[
  \lim_{d\to\infty} C_d=\frac{1}{\e}.
\]
\end{lemma}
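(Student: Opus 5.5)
The plan is a one-variable calculus computation followed by a standard limit. Fix \(d>0\) and put \(\beta=1/(1+d)\in(0,1)\), so the objective is
\[
  g(r)=d\bigl(r-r^{1+\beta}\bigr),\qquad 0<r<1.
\]
We have \(g(r)>0\) on \((0,1)\) and \(g(r)\to0\) at both endpoints. Hence \(g\) attains a maximum at an interior critical point.

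The derivative is \(g'(r)=d\bigl(1-(1+\beta)r^{\beta}\bigr)\). It is strictly decreasing in \(r\), so there is a unique critical point, namely \(r_*^{\beta}=1/(1+\beta)\). Since \(1+\beta=(d+2)/(d+1)\), this gives
\[
  r_*=\left(\frac{d+1}{d+2}\right)^{d+1},
\]
which is the claimed maximizer. Substituting, and using \(1-r_*^{\beta}=1-(d+1)/(d+2)=1/(d+2)\), we get
\[
  g(r_*)=d\,r_*\bigl(1-r_*^{\beta}\bigr)=\frac{d}{d+2}\left(\frac{d+1}{d+2}\right)^{d+1}=C_d .
\]
This establishes \eqref{eq:Cd}.

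For the limit, note that \(d/(d+2)\to1\). For the remaining factor,
\[
  \left(\frac{d+1}{d+2}\right)^{d+1}=\left(1-\frac{1}{d+2}\right)^{d+1}.
\]
Its logarithm is
\[
  (d+1)\log\left(1-\frac{1}{d+2}\right)=-\frac{d+1}{d+2}+O\!\left(\frac1d\right)\longrightarrow -1,
\]
so this factor tends to \(1/\e\), and therefore \(C_d\to1/\e\).

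The proof has no real obstacle. The only points needing care are confirming that the unique critical point is a maximum, which follows from the boundary values and the sign of \(g'\), and the algebraic simplification of \(1-r_*^{\beta}\).
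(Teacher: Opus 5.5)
Your proposal is correct and follows essentially the same route as the paper: substitute \(s=1/(1+d)\), locate the unique critical point of \(r-r^{1+s}\) from \(r^s=1/(1+s)=(d+1)/(d+2)\), and conclude it is the global maximum because the function is positive on \((0,1)\) and vanishes at both endpoints. Your evaluation via \(1-r_*^{1/(1+d)}=1/(d+2)\) and the limit of \(\bigl(1-\frac{1}{d+2}\bigr)^{d+1}\) to \(1/\e\) also match the paper's argument.
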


\begin{proof}
Put
\[
  s=\frac{1}{1+d}.
\]
It suffices to maximize
\[
  g(r)=r-r^{1+s}
  \qquad (0<r<1).
\]
Since
\[
  g'(r)=1-(1+s)r^s,
\]
the unique critical point of \(g\) in \((0,1)\) is determined by
\[
  r^s=\frac{1}{1+s}=\frac{d+1}{d+2},
\]
that is,
\begin{equation}
\label{eq:g-critical}
  r=
  \left(\frac{d+1}{d+2}\right)^{d+1}.
\end{equation}
As \(g\) is continuous on \([0,1]\), positive on \((0,1)\), and
vanishes at both endpoints, the critical point~\eqref{eq:g-critical} is the global maximum.
At this value,
\[
  1-r^{1/(1+d)}
  =
  1-\frac{d+1}{d+2}
  =
  \frac{1}{d+2},
\]
and \eqref{eq:Cd} follows.  Finally,
\[
  \frac{d}{d+2}\longrightarrow 1
\]
and
\[
  \left(\frac{d+1}{d+2}\right)^{d+1}
  =
  \left(1-\frac{1}{d+2}\right)^{d+1}
  \longrightarrow \e^{-1}.
\]
\end{proof}

\begin{remark}\label{rem:sup}
The constant \(1/\e\) is the supremum of what
Proposition~\ref{prop:parameters} can deliver, and it is not attained
for any finite \(d\).  Indeed, from \(1-r^{1/(1+d)}<\log(1/r)/(1+d)\)
for \(0<r<1\) we get
\[
  C_d
  <
  \frac{d}{d+1}\,\max_{0<r<1}\, r\log\frac{1}{r}
  =
  \frac{d}{(d+1)\e}
  <\frac{1}{\e}
  \qquad(d>0),
\]
while \(C_d\to1/\e\) by Lemma~\ref{lem:optimization}.  In the limit as
\(d\to\infty\), the optimization degenerates to maximizing
\(r\log(1/r)\), whose maximum \(1/\e\) is attained at \(r=1/\e\);
correspondingly, the optimal ratio
\(r=\bigl((d+1)/(d+2)\bigr)^{d+1}\) of Lemma~\ref{lem:optimization}
tends to \(1/\e\), yielding the constant in
Theorem~\ref{thm:main}.

We stress the resulting order of quantifiers
in Theorem~\ref{thm:main}: a coefficient \(C\) near \(1/\e\) requires
\(d\) large, and at the optimizing choice we have \(b=dr\sim d/\e\)
and \(a>a_0\sim d/\e\), so the certified intervals begin at heights
\(m\sim a\,nL\) with \(a\to\infty\) as \(C\to1/\e\).  No endpoint
construction with \(d=d(n)\to\infty\) is asserted---indeed
Lemma~\ref{lem:alpha-specialization} is a fixed-parameter estimate.
\end{remark}

We can now prove the main theorem and its scale corollary.

\begin{proof}[Proof of Theorem~\ref{thm:main}]
Let \(0<C<1/\e\).  By Lemma~\ref{lem:optimization}, choose \(d>0\)
so large that
\[
  C<C_d.
\]
Let
\[
  r=
  \left(\frac{d+1}{d+2}\right)^{d+1},
  \qquad
  b=dr,
\]
so that \(0<b<d\), and set \(a_0=a_0(d,b)\) as in
\eqref{eq:a-boundary}.  Then
\[
  b-a_0=b\left(1-\left(\frac{b}{d}\right)^{1/(1+d)}\right)
  =dr\left(1-r^{1/(1+d)}\right)=C_d,
\]
and, by \eqref{eq:a-boundary}, the triple \((d,a,b)\) satisfies
\eqref{eq:parameter-condition} precisely when \(a\in(a_0,b)\).  We may
therefore choose \(a\in(a_0,b)\) so close to \(a_0\) that
\[
  b-a>C.
\]
Proposition~\ref{prop:parameters} then implies that
\[
  F(n)
  \ge
  \bigl(b-a+o(1)\bigr)n\log n.
\]
By the upper bound in \eqref{eq:EP-bounds},
\[
  f(n,n)=O(n\sqrt{\log n})=o(n\log n).
\]
Therefore
\[
  \liminf_{n\to\infty}
  \frac{F(n)-f(n,n)}{n\log n}
  \ge b-a>C.
\]
Since \(C<1/\e\) was arbitrary, the theorem follows.
\end{proof}

\begin{proof}[Proof of Corollary~\ref{cor:arbitrary}]
By Theorem~\ref{thm:main}, for all sufficiently large \(n\),
\[
  F(n)-f(n,n)
  \ge
  \frac{1}{1754\e} n\log n.
\]
For fixed \(A>0\), we have \(A/\log\log n<1/(1754\e)\) for all
sufficiently large \(n\).\footnote{The choice \(1/(1754\e)\) here is
purely aesthetic; we could use any fixed positive constant less than
\(1/\e\).}  Hence
\[
  F(n)-f(n,n)
  > \frac{A n\log n}{\log\log n}
\]
for all sufficiently large \(n\); the displayed limiting form is
immediate.
\end{proof}

\section{Remarks}\label{sec:remarks}

\subsection{Relation to van Doorn's diagonal-transfer framework}\label{sec:transfer}

We now record a comparison with the transfer inequality
\eqref{eq:vanDoorn-ineq}.

Combining \eqref{eq:vanDoorn-ineq} with
\(F(n)\ge f(n,k^2n)\) gives, for every \(k\in\N\),
\[
  F(n)-f(n,n)
  \ge
  kn+f(kn,kn)-k^2n-f(n,n).
\]
Write \(L=\log n\) and \(\ell=\log\log n\).  Insert the lower bound of
\eqref{eq:EP-bounds} at \(N=kn\), and first take
\[
  k=\left\lfloor\kappa\sqrt{\frac{L}{\ell}}\right\rfloor
\]
with \(\kappa>0\) fixed.  Then
\[
  kn=o\left(\frac{nL}{\ell}\right)
\]
and, by the upper bound in \eqref{eq:EP-bounds},
\[
  f(n,n)=O(n\sqrt L)
  =o\left(\frac{nL}{\ell}\right).
\]
Moreover,
\[
  \log(kn)=(1+o(1))L,
  \qquad
  \log\log(kn)=(1+o(1))\ell.
\]
It follows that
\[
  f(kn,kn)
  \ge
  \left(\frac{2\kappa}{\sqrt{\e}}+o(1)\right)
  \frac{nL}{\ell},
  \qquad
  k^2n
  =
  \left(\kappa^2+o(1)\right)
  \frac{nL}{\ell}.
\]
Consequently,
\[
  F(n)-f(n,n)
  \ge
  \left(
    \frac{2\kappa}{\sqrt{\e}}-\kappa^2-o(1)
  \right)
  \frac{nL}{\ell}.
\]
The coefficient
\[
  \frac{2\kappa}{\sqrt{\e}}-\kappa^2
\]
is maximized at \(\kappa=1/\sqrt{\e}\), where its value is \(1/\e\).

With only the diagonal estimates \eqref{eq:EP-bounds} inserted into \eqref{eq:vanDoorn-ineq}, the optimization can neither improve the
\(nL/\ell\) scale, nor the constant.  Indeed, suppose first that \(k\le3\sqrt L\)
and put
\[
  \kappa_n:=k\sqrt{\frac{\ell}{L}}.
\]
If \(\kappa_n\) remains bounded along a subsequence, the preceding
calculation applies along that subsequence and gives a coefficient at
most \(1/\e+o(1)\).  If instead \(\kappa_n\to\infty\), then the positive
term supplied by the diagonal lower estimate is
\[
  O(\kappa_n)\frac{nL}{\ell},
\]
whereas
\[
  k^2n=\kappa_n^2\frac{nL}{\ell};
\]
the resulting lower bound is therefore eventually negative.

For \(k\ge3\sqrt L\), the upper bound in \eqref{eq:EP-bounds}, now
applied at \(N=kn\), shows that the transfer expression itself is
nonpositive for all sufficiently large \(n\).  Indeed,
\[
  f(kn,kn)
  \le
  \bigl(2+o(1)\bigr)kn\sqrt{\log(kn)},
\]
while \(k\ge3\sqrt L\) implies, uniformly in this range,
\[
  1+\bigl(2+o(1)\bigr)\sqrt{\log(kn)}<k.
\]
Indeed, \((L+\log k)/k^{2}\) is decreasing in \(k\), so the ratio
\(\sqrt{\log(kn)}/k\) is largest at the left endpoint
\(k=\lceil3\sqrt L\,\rceil\), where it equals \(\tfrac{1}{3}+o(1)\); thus
\(\sqrt{\log(kn)}\le\bigl(\tfrac{1}{3}+o(1)\bigr)k\) throughout the range.
Hence
\[
  kn+f(kn,kn)-k^2n<0.
\]

Thus the transfer inequality, when supplied with the currently known
diagonal estimates, yields at most
\[
  F(n)-f(n,n)
  \ge
  \left(\frac{1}{\e}-o(1)\right)\frac{nL}{\ell},
\]
which is exactly the same \(nL/\ell\) scale obtained by van Doorn, with the
numerical constant sharpened from \(0.36\) to \(1/\e\) by optimizing
the transfer parameter.  The bottleneck is the diagonal lower bound:
the quadratic transfer squares the \(\sqrt{\log\log n}\) gap in
\eqref{eq:EP-bounds}, and this route could reach the \(n\log n\) scale
only if the diagonal lower bound were improved to
\(\asymp n\sqrt{\log n}\).  The off-diagonal obstruction leveraged in this paper
bypasses the diagonal---and thus sidesteps that barrier.

At present we do not know whether the numerical agreement between
the transfer optimum and the constant in Theorem~\ref{thm:main} is a
coincidence. That said, given the recurring role of smooth-number phenomena in
both arguments, it would not be surprising if the concordance of constants reflected a deeper
connection.

\subsection{The diagonal bound revisited}\label{sec:diagonal}

Lemma~\ref{lem:obstruction} in its original diagonal case \(m=n\),
combined with the estimates of Section~\ref{sec:local}, also recovers
the lower bound of \eqref{eq:EP-bounds}, with the same constant.  We
include the short computation because it locates the source of the
constant \(2/\sqrt{\e}\) and clarifies how the diagonal and
off-diagonal regimes of the obstruction differ.  The optimal
threshold below is \(y\asymp\log n/\log\log n\), outside the regime
\(y=d\log n\) of Lemma~\ref{lem:alpha-specialization}, so we argue
directly from \eqref{eq:HT-ratio} and \eqref{eq:alpha-estimate}.

\begin{proposition}[Erd\H{o}s--Pomerance
{\cite[Theorem~2]{ErdosPomerance}}]\label{prop:diagonal}
As \(n\to\infty\),
\[
  f(n,n)\;\ge\;\left(\frac{2}{\sqrt{\e}}+o(1)\right)
  n\sqrt{\frac{\log n}{\log\log n}}.
\]
\end{proposition}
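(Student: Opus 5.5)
We apply Lemma~\ref{lem:obstruction} with \(m=n\), \(E=n(1+\delta)\) and \(y=\beta L/\ell\), where \(\beta>0\) is fixed and \(\delta=\delta(n)\to\infty\) is to be chosen with \(1+\delta=y^{\theta}\), \(0<\theta<1/2\). Then \(E/y<n\). The condition \eqref{eq:obstruction-condition} reads
\[
  \Psi(n,y)-\Psi(E/y,y)>\Psi(E,y)-\Psi(n,y).
\]
We evaluate both sides with \eqref{eq:HT-ratio} at the single base point \(x=n\), using the dilations \(c=y/(1+\delta)\) for the left side and \(c=1+\delta\) for the right side. Both dilations lie in \([1,y]\).

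Put \(A:=\alpha(n,y)\log y\). From \eqref{eq:alpha-estimate}, since \(y/L=\beta/\ell\to0\), we get \(\alpha(n,y)=(1+o(1))\,y/(L\log y)\). Hence \(A=(1+o(1))\beta/\ell\). With this notation the two sides become, up to a factor \(1+O(\ell/L)\),
\[
  \Psi(n,y)\bigl(1-\e^{-A(1-\theta)}\bigr)
  \quad\text{and}\quad
  \Psi(n,y)\bigl(\e^{A\theta}-1\bigr).
\]
The obstruction inequality is therefore equivalent to \(\e^{A\theta}(1+\e^{-A})<2\), up to these errors.

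We then expand in the small quantity \(A\). Using \(\log(1+\e^{-A})=\log2-A/2+A^2/8+O(A^3)\), the inequality becomes
\[
  \theta<\tfrac12-\tfrac{A}{8}+O(A^2).
\]
So any \(\theta=\tfrac12-\tfrac{A}{8}-\eta A\) with fixed \(\eta>0\) is admissible. For this \(\theta\),
\[
  \log(1+\delta)=\theta\log y=\tfrac12\log y-\tfrac{\beta}{8}-\eta\beta+o(1),
\]
since \(A\log y=(1+o(1))\beta\). This gives
\[
  \delta\ge\bigl(\sqrt{\beta}\,\e^{-\beta/8-\eta\beta}+o(1)\bigr)\sqrt{L/\ell}.
\]
Lemma~\ref{lem:obstruction} then yields \(f(n,n)>E-n=\delta n\). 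Letting \(\eta\to0\) and maximizing \(\sqrt\beta\,\e^{-\beta/8}\) at \(\beta=4\) gives the constant \(2\e^{-1/2}\).

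The main obstacle is controlling the errors. The two sides differ only at second order: each is \(\asymp A\,\Psi(n,y)\asymp\Psi(n,y)/\ell\), while the margin between them is \(\asymp A^2\Psi(n,y)\asymp\Psi(n,y)/\ell^{2}\).

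\emph{Ratio error.} In the regime \(y\asymp L/\ell\) we have \(1/u=\log y/L\asymp\ell/L\) and \(\log y/y\asymp\ell^{2}/L\). So the relative error in \eqref{eq:HT-ratio} is \(O(\ell^2/L)\). Multiplied by the main terms \(\Psi(n,y)\asymp\Psi(n,y)\cdot 1\), this gives an absolute error \(O(\ell^2/L)\,\Psi(n,y)\), which is \(o(\Psi(n,y)/\ell^{2})\) because \(\ell^{4}=o(L)\).

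\emph{Error in \(\alpha\).} This one is subtler. The factor \(1+O(\log\log y/\log y)\) in \eqref{eq:alpha-estimate} changes \(A\) only by a factor \(1+o(1)\). That is harmless: the leading terms \(A/2\) on both sides cancel exactly, and \(A\) enters only through the constraint on \(\theta\) and through \(A\log y\to\beta\), where a factor \(1+o(1)\) costs only \(o(1)\) in the exponent.

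I would check these two points carefully, including uniformity of \eqref{eq:HT-ratio} when \(c=y/(1+\delta)\) and \(c=1+\delta\) both grow. Everything else is routine Taylor expansion.
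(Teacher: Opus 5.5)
Your strategy is the paper's own: Lemma~\ref{lem:obstruction} at \(m=n\), \(E=Kn\) (your \(K=1+\delta=y^{\theta}\)), \(y=\beta L/\ell\) with \(\beta=4\) optimal, reduction to \(K^{\alpha}(1+y^{-\alpha})<2\), and the second-order expansion giving \(2/\sqrt{\e}\). Your error bookkeeping is also sound. However, one step fails as written.

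You cannot reach \(\Psi(E/y,y)\) from the base point \(x=n\). Since \(E/y=n(1+\delta)/y<n\), the needed dilation is \((1+\delta)/y<1\), which lies outside the range \(1\le c\le y\) of \eqref{eq:HT-ratio}. Base \(n\) with \(c=y/(1+\delta)\) produces \(\Psi(ny/(1+\delta),y)\), and that term does not appear in \eqref{eq:obstruction-condition}. The legitimate application uses base \(E/y\). It gives the left side as \(\Psi(n,y)\bigl(1-\e^{-A'(1-\theta)}\bigr)\) with \(A'=\alpha(E/y,y)\log y\), while the right side carries \(A=\alpha(n,y)\log y\). The first-order terms \(A/2\) and \(A'/2\) then no longer cancel exactly, which your \(\alpha\)-error argument relied on. Nor can \eqref{eq:alpha-estimate} show that they nearly cancel: it pins down each of \(A,A'\) only to relative accuracy \(O(\log\ell/\ell)\), i.e.\ to within \(O(\log\ell/\ell^{2})\). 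That is larger than your margin \(\asymp A^{2}\asymp\ell^{-2}\).

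Either of two short repairs closes this.
\begin{itemize}
\item The paper's choice is a single base point \(x=E/y\) for both sides, with dilations \(c=y\) (reaching \(E\)) and \(c=y/K\) (reaching \(n\)). Then one exponent \(\alpha=\alpha(E/y,y)\) appears throughout, and the obstruction condition becomes exactly \(K^{\alpha}(1+y^{-\alpha})<2\). Since \(\log(E/y)=L+O(\ell)\), your asymptotics \(A=(1+o(1))\beta/\ell\) and \(A\log y\to\beta\), and the \(O(\ell^{2}/L)\) ratio error, carry over verbatim.
\item Alternatively, keep the mixed bases and note from \eqref{eq:alpha-definition} that \(\alpha(\cdot,y)\) is decreasing in \(x\), so \(A'\ge A\). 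This only enlarges \(1-\e^{-A'(1-\theta)}\), so it suffices to prove your inequality with \(A\) on both sides.
\end{itemize}
With either fix the rest of your argument is correct. The uniformity you worried about is not an issue, since \eqref{eq:HT-ratio} is uniform in \(1\le c\le y\).
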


\begin{proof}
Write \(L=\log n\) and \(\ell=\log\log n\).  Fix
\(\varepsilon\in(0,1)\) and set
\[
  y=\frac{4L}{\ell},
  \qquad
  K=\left\lfloor(1-\varepsilon)\,\frac{2}{\sqrt{\e}}
    \sqrt{\frac{L}{\ell}}\right\rfloor,
  \qquad
  E=Kn,
\]
so that \(1<K<\sqrt{y}\) and \(y<E/y<n\) for all sufficiently large
\(n\).  Put \(s:=\log y=\ell-\log\ell+\log4+o(1)\), and let
\(\alpha:=\alpha(E/y,\,y)\) and \(t:=\alpha s\), so that
\(y^{-\alpha}=\e^{-t}\).

Since \(\log(E/y)=L+O(\ell)\), the Hildebrand--Tenenbaum estimate \eqref{eq:alpha-estimate}
gives
\[
  t=\log\left(1+\frac{y}{\log(E/y)}\right)
    \left(1+O\left(\frac{\log\ell}{\ell}\right)\right)
   =\frac{4}{\ell}\left(1+O\left(\frac{\log\ell}{\ell}\right)\right),
\]
so that \(\alpha=t/s\asymp\ell^{-2}\). Moreover, in
\eqref{eq:HT-ratio} with base \(x=E/y\), the relative error is
\[
  \eta:=O\left(\frac{1}{u}+\frac{\log y}{y}\right)
      =O\left(\frac{\ell^{2}}{L}\right)=o(\alpha),
\]
since \(u=\log(E/y)/\log y\asymp L/\ell\) and \(\ell^{4}=o(L)\).

We apply \eqref{eq:HT-ratio} twice with the single base \(x=E/y\)---once
with \(c=y\) and once with \(c=y/K\in(1,y)\)---using \((E/y)\,y=E\) and
\((E/y)(y/K)=n\):
\[
  \Psi(E,y)=\Psi(E/y,y)\,y^{\alpha}\bigl(1+O(\eta)\bigr),
  \qquad
  \Psi(n,y)=\Psi(E/y,y)\,(y/K)^{\alpha}\bigl(1+O(\eta)\bigr).
\]
The hypothesis \eqref{eq:obstruction-condition} of
Lemma~\ref{lem:obstruction}, with \(m=n\), is
\[
  2\Psi(n,y)>\Psi(E,y)+\Psi(E/y,y).
\]
Substituting the two ratio estimates above and dividing through by
\(\Psi(E/y,y)(y/K)^\alpha\), we find an absolute constant \(C_0>0\)
such that the obstruction condition follows from
\[
  2\bigl(1-C_0\eta\bigr)
  >
  K^\alpha\bigl(1+\e^{-t}\bigr)+C_0\eta\,K^\alpha.
\]
Since \(K<\sqrt y\), we have
\[
  \alpha\log K<\frac{t}{2}=o(1),
\]
and hence \(K^\alpha=1+o(1)\).  It therefore suffices to prove
\begin{equation}\label{eq:diag-condition}
  K^\alpha\bigl(1+\e^{-t}\bigr)<2-C_1\eta
\end{equation}
for a suitable absolute constant \(C_1>0\); taking logarithms, this
is in turn implied by
\[
  \alpha\log K<\varphi(t)-C_2\eta,
  \qquad
  \varphi(t):=\log\frac{2}{1+\e^{-t}}.
\]
Since \(\varphi(t)=\tfrac{t}{2}-\tfrac{t^{2}}{8}+O(t^{3})\), while
\(st=4+O(\log\ell/\ell)\), \(st^{2}=O(1/\ell)\) and
\(\eta/\alpha=o(1)\), the resulting admissible range for \(\log K\)
has right endpoint
\[
  \frac{\varphi(t)}{\alpha}-\frac{C_2\eta}{\alpha}
  =\frac{s}{2}-\frac{st}{8}+O\bigl(st^{2}\bigr)-o(1)
  =\frac{\ell-\log\ell+\log4}{2}-\frac{1}{2}+o(1)
  =\log\left(\frac{2}{\sqrt{\e}}\sqrt{\frac{L}{\ell}}\right)+o(1).
\]
Our choice satisfies
\(\log K\le\log(1-\varepsilon)
+\log\bigl(\tfrac{2}{\sqrt{\e}}\sqrt{L/\ell}\bigr)\),
and \(\log(1-\varepsilon)\) is a negative constant while
\(\eta=o(\alpha)\), so
\eqref{eq:diag-condition} holds for all sufficiently large \(n\).
Lemma~\ref{lem:obstruction} then yields
\[
  f(n,n)>E-n=(K-1)n
  \ge\left((1-\varepsilon)\frac{2}{\sqrt{\e}}+o(1)\right)
  n\sqrt{\frac{L}{\ell}}.
\]
As \(\varepsilon\in(0,1)\) was chosen arbitrarily, the proposition follows.
\end{proof}

Two features of the computation deserve comment.  First, the optimal
parameters differ structurally from those of
Section~\ref{sec:construction}.  On the diagonal, the threshold is
\[
  y\asymp\frac{4\log n}{\log\log n},
\]
the saddle-point exponent satisfies
\[
  \alpha\asymp(\log\log n)^{-2},
\]
and the endpoint ratio
\[
  \frac{E}{n}=K\asymp\sqrt y
\]
is unbounded.  Off the diagonal, by contrast, the threshold is
\(y=d\log n\), the exponent is
\(\alpha\asymp1/\log\log n\), and the ratios entering the first two
local comparisons remain bounded.

Second, within the main-term version of the diagonal smooth-number
comparison, there is a natural ceiling.  Disregarding the error
factors in \eqref{eq:HT-ratio} and \eqref{eq:alpha-estimate}, the
smooth-number obstruction condition becomes
\[
  \alpha\log K<\varphi(t);
\]
since
\[
  \varphi(t)<\frac{t}{2}
  \qquad(t>0),
\]
this main-term condition forces \(K<\sqrt y\).  The resulting
second-order optimization, carried out in the proof of
Proposition~\ref{prop:diagonal}, selects
\[
  y\sim\frac{4\log n}{\log\log n}
\]
and yields the constant \(2/\sqrt{\e}\).  Thus, within the diagonal
implementation of the smooth-number obstruction, the
\(\sqrt{\log\log n}\) gap in \eqref{eq:EP-bounds} appears intrinsic.
The improvement of the diagonal bound contemplated in
Section~\ref{sec:transfer} would therefore appear to require going beyond the present
obstruction method.

\subsection{Toward the upper bound}\label{sec:upper}

Lemma~\ref{lem:obstruction} can be interpreted as one half of an exact
matching duality.  Form the bipartite graph joining each
\(i\in\{1,\ldots,n\}\) to the integers \(j\in(m,m+h]\) satisfying
\(i\mid j\), and for \(S\subseteq\{1,\ldots,n\}\) write
\[
  \Gamma(S)
  :=
  \bigl\{\,j\in(m,m+h]:
       \text{\(i\mid j\) for some \(i\in S\)}\,\bigr\}
\]
for the neighborhood of \(S\).  A system of pairwise
distinct multiples is precisely a matching saturating
\(\{1,\ldots,n\}\).  By Hall's marriage theorem \cite{Hall}, we have \(f(n,m)\le h\) if and only if
\[
  |\Gamma(S)|\ge|S|
  \qquad\text{for every \(S\subseteq\{1,\ldots,n\}\)};
\]
this is the route by which the upper bounds of
\cite{ErdosPomerance} are proven.  Lemma~\ref{lem:obstruction} says
that, when \eqref{eq:obstruction-condition} holds, the \(y\)-smooth
integers in \((E/y,n]\) violate Hall's condition in the window
\((m,E]\)---all of their neighbors in that window are \(y\)-smooth.

Upper bounds for \(F(n)\) can therefore be viewed as structure
theorems for Hall violators.  Such a violator may always be considered to
be closed under taking multiples within \(\{1,\ldots,n\}\).  Indeed,
for \(S\subseteq\{1,\ldots,n\}\), define
\[
  S^\uparrow
  :=
  \{j\le n:\text{\(i\mid j\) for some \(i\in S\)}\}.
\]
Every neighbor of an element of \(S^\uparrow\) is already a neighbor
of an element of \(S\), and therefore
\[
  \Gamma(S^\uparrow)=\Gamma(S).
\]
Since \(|S^\uparrow|\ge|S|\), replacing \(S\) by \(S^\uparrow\)
preserves, and may strengthen, any violation of Hall's condition.

The proof of \cite[Theorem~6]{ErdosPomerance} contains a strong
partial converse.  Write \(d_n(j)\) for the number of divisors of
\(j\) lying in \([1,n]\), and let \(T\) be a positive integer.  If no
integer of the window \((m,\,m+nT]\) has \(d_n(j)>T\), then in the
bipartite graph described above every index \(i\le n\) has at least
\(\lfloor nT/i\rfloor\ge T\) neighbors, while every \(j\) has at most
\(T\) neighbors.  Hence the standard valence form of Hall's theorem
\cite[Theorem~5.2]{BondyMurty}---that a bipartite graph has a matching
saturating the left side whenever every left degree is at least \(T\)
and every right degree is at most \(T\)---gives Hall's condition at once,
and \(f(n,m)\le nT\).  Erd\H{o}s and Pomerance applied this with \(T\) of
size
\[
  D(n):=\exp\bigl((\beta+o(1))\log n/\log\log n\bigr),
  \qquad
  \beta=\log\tfrac{5}{2}+\tfrac{3}{2}\log\tfrac{5}{3}=1.6825\ldots;
\]
their count of the integers with abnormally many divisors below \(n\)
then shows that \(f(n,m)\le n\,D(n)=n^{1+o(1)}\) for all but an
\(n^{-1/2+o(1)}\) proportion of the residues \(m\) modulo
\(\lcm(1,\ldots,n)\).  The same valence count, with a threshold adapted
to the height of the window, establishes \eqref{eq:711a} at every
polynomial height.
\begin{proposition}[after Erd\H{o}s--Pomerance]\label{prop:polyheight}
For every fixed \(C>0\),
\[
  \max_{m\le n^{C}}f(n,m)\;\le\;n^{1+o(1)}.
\]
\end{proposition}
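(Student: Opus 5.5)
We will use the valence form of Hall's theorem described just above. Take \(T\) to be a positive integer, to be chosen, and consider the window \((m,m+nT]\) with \(m\le n^{C}\). If no integer \(j\) in this window satisfies \(d_n(j)>T\), then every index \(i\le n\) has at least \(\lfloor nT/i\rfloor\ge T\) multiples there, and every \(j\) has at most \(T\) divisors in \([1,n]\). Hall's theorem in valence form then gives \(f(n,m)\le nT\). So it suffices to find \(T=n^{o(1)}\) such that every integer \(j\le n^{C}+nT\) satisfies \(d_n(j)\le T\). Note that \(d_n(j)\le d(j)\), the full divisor function.

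This is where the restriction to polynomial height enters. Every integer \(j\le x\) satisfies the classical Wigert bound
\[
  d(j)\le \exp\bigl((\log 2+o(1))\log x/\log\log x\bigr).
\]
We apply it with \(x=n^{C}+nT\). As long as \(T\le n^{C}\), we have \(x\le 2n^{\max(C,1)+1}\), so \(\log x\le (C+2)\log n\) for large \(n\) and \(\log\log x=(1+o(1))\log\log n\). Accordingly we set
\[
  T:=\Bigl\lceil \exp\bigl((C+2)(\log 2+o(1))\log n/\log\log n\bigr)\Bigr\rceil ,
\]
which is \(n^{o(1)}\), and choose the \(o(1)\) term so that it dominates the Wigert bound uniformly for all \(j\le x\). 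The requirement \(T\le n^{C}\) is then consistent for large \(n\), so \(d_n(j)\le d(j)\le T\) throughout the window. Hence \(f(n,m)\le nT=n^{1+o(1)}\) uniformly in \(m\le n^{C}\).

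The main obstacle is bookkeeping rather than substance. The threshold \(T\) determines the window length, and the window length determines how large \(j\) can be. We break this circularity with the crude a priori choice \(T\le n^{C}\), so that \(\log x\asymp\log n\). We also need the \(o(1)\) in Wigert's bound to be uniform, which holds because it is a bound on \(\max_{j\le x}d(j)\). No averaging over residues is needed, unlike in Erdős–Pomerance's treatment of all \(m\), because the pointwise divisor bound is already \(n^{o(1)}\) at polynomial height. This is exactly why the argument does not extend to arbitrary \(m\).
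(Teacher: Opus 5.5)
Your proof is correct, and it takes a different route from the paper at the one step that matters: the bound \(d_n(j)\le T=n^{o(1)}\). The valence form of Hall's theorem on the window \((m,m+nT]\) is common to both proofs.

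You drop the restriction to divisors in \([1,n]\). You use \(d_n(j)\le d(j)\) and then Wigert's maximal-order bound at height \(x\le 2n^{C+1}\), and your a priori cap \(T\le n^{C}\) correctly breaks the circularity between \(T\) and \(x\). The paper keeps the restriction. It injects the divisors of \(j\) lying in \([1,n]\) into the \(p_\omega\)-smooth integers up to \(n\), by replacing the \(i\)-th prime factor \(q_i\le n\) of \(j\) with the \(i\)-th prime \(p_i\). It then uses \(\vartheta(p_\omega)\le\log j\) and the prime number theorem to get \(p_\omega\le(C+o(1))\log n\), and finally invokes de Bruijn's estimate for \(\Psi(n,\tau\log n)\).

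Your route is shorter and more elementary. Wigert's upper bound has a self-contained proof that needs neither the prime number theorem nor smooth-number asymptotics.

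The paper's route is sharper in its dependence on the height. Its exponent constant is \(G(C)+1\), with \(G(C)=\log C+1+O(1/C)\), whereas yours is roughly \((C+2)\log 2\). Since \(\max_{j\le x}d(j)\) genuinely has Wigert size, your argument gives \(n^{o(1)}\) only while \(\log m=o(\log n\log\log n)\). The paper's count of divisors below \(n\) via \(\Psi(n,p_\omega)\), with \(p_\omega\le(1+o(1))\log m\) in general, stays \(n^{o(1)}\) as long as \(\log m\le(\log n)^{1+o(1)}\). The paper's version also fits its smooth-number framework and isolates the height bound on \(p_\omega\) as the new ingredient relative to Erd\H{o}s--Pomerance. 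For the proposition as stated, with \(C\) fixed, both arguments suffice.
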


\begin{proof}
We may assume \(C\ge1\), since the left-hand side of the claimed bound is nondecreasing in
\(C\).  Write
\[
  G(\tau)
  :=
  \log(1+\tau)+\tau\log\left(1+\frac{1}{\tau}\right),
\]
and set
\[
  T
  :=
  \left\lceil
    \exp\left(
      (G(C)+1)\frac{\log n}{\log\log n}
    \right)
  \right\rceil.
\]
Thus \(T=n^{o(1)}\).  Every integer in a window
\((m,m+nT]\) with \(m\le n^C\) satisfies
\[
  j\le n^C+nT\le n^{C+o(1)}.
\]

Let \(q_1<\cdots<q_\omega\) be the distinct prime factors of \(j\) that
do not exceed \(n\), and let \(p_i\) denote the \(i\)-th prime.  If
\(\omega=0\), then the only divisor of \(j\) lying in \([1,n]\) is
\(1\), so \(d_n(j)=1<T\) for all sufficiently large \(n\); we may
therefore assume \(\omega\ge1\).  Every divisor \(r\le n\) of \(j\)
involves only the primes \(q_1,\ldots,q_\omega\)---a prime factor of
\(j\) exceeding \(n\) cannot divide \(r\), since it alone would
already make \(r>n\)---and hence can be written uniquely in the form
\[
  r=q_1^{e_1}\cdots q_\omega^{e_\omega}.
\]
Replacing each \(q_i\) by \(p_i\) defines an injection
\[
  q_1^{e_1}\cdots q_\omega^{e_\omega}
  \longmapsto
  p_1^{e_1}\cdots p_\omega^{e_\omega}
\]
from the divisors of \(j\) in \([1,n]\) to the \(p_\omega\)-smooth
integers in \([1,n]\).  Indeed, \(p_i\le q_i\) for every \(i\), and
therefore
\[
  p_1^{e_1}\cdots p_\omega^{e_\omega}
  \le
  q_1^{e_1}\cdots q_\omega^{e_\omega}
  \le n.
\]
Consequently,
\begin{equation}\label{eq:dn-smooth-bound}
  d_n(j)\le\Psi(n,p_\omega).
\end{equation}

We also have
\[
  j\ge q_1q_2\cdots q_\omega
  \ge p_1p_2\cdots p_\omega.
\]
If \(p_\omega\) remains bounded, then \(p_\omega\le(C+o(1))\log n\)
holds \textit{a priori}.  Otherwise, by the Prime Number Theorem in the form
\(\vartheta(x)\sim x\),
\[
  \log(p_1p_2\cdots p_\omega)
  =\vartheta(p_\omega)
  =(1+o(1))p_\omega.
\]
Since \(j\le n^{C+o(1)}\), it follows in either case that
\[
  p_\omega\le(C+o(1))\log n.
\]

Choose a fixed \(\delta>0\) sufficiently small that
\[
  G(C+\delta)<G(C)+\frac{1}{2}.
\]
For all sufficiently large \(n\), we then have
\[
  p_\omega\le(C+\delta)\log n.
\]
By monotonicity and the classical de~Bruijn estimate \cite{deBruijn}
\[
  \log\Psi(x,\tau\log x)
  =
  \bigl(G(\tau)+o(1)\bigr)
  \frac{\log x}{\log\log x}
  \qquad(\text{\(\tau>0\) fixed})
\]
(see also \cite[Chapter~III.5]{Tenenbaum}), we obtain from \eqref{eq:dn-smooth-bound}
\begin{align*}
  d_n(j)
  &\le
  \Psi\bigl(n,(C+\delta)\log n\bigr)\\
  &=
  \exp\left(
    \bigl(G(C+\delta)+o(1)\bigr)
    \frac{\log n}{\log\log n}
  \right)\\
  &<
  \exp\left(
    (G(C)+1)\frac{\log n}{\log\log n}
  \right)\\
  &\le T
\end{align*}
for all sufficiently large \(n\), uniformly for
\(m\le n^C\) and \(j\in(m,m+nT]\).

It remains to apply the valence form of Hall's theorem cited above.
Every index \(i\le n\) has at least
\[
  \left\lfloor\frac{nT}{i}\right\rfloor\ge T
\]
neighbors in \((m,m+nT]\), whereas every integer \(j\) in that window
has fewer than \(T\) neighbors among the indices \(1,\ldots,n\).
Hence there is a matching saturating \(\{1,\ldots,n\}\).  Therefore
\[
  f(n,m)\le nT=n^{1+o(1)}
\]
uniformly for \(m\le n^C\).  The \(o(1)\) may depend on the fixed
constant \(C\).
\end{proof}

All ingredients of the proof of Proposition~\ref{prop:polyheight} are already in \cite{ErdosPomerance},
where they drive their Theorem~6.  The one
additional observation here is the height bound
\[
  p_\omega\le(C+o(1))\log n,
\]
which eliminates the exceptional set of that argument throughout
every fixed polynomial range.

What remains of \eqref{eq:711a} is thus genuinely a question about
windows at super-polynomial heights, positioned near the sparse
integers having abnormally many divisors in \([1,n]\).

The long intervals obtained in this paper nevertheless may offer a hint about the extremal
violators.  Our long intervals occur already at height \(m\asymp n\log n\), far
below the unresolved range, and, as explained in the Introduction, in the regimes we have considered, the smooth-number obstruction produces no interval longer
than \(\asymp n\log n\).

\begin{question}
Is \(F(n)\ll n\log n\)?  In structural terms: is the smooth family of
Lemma~\ref{lem:obstruction}, up to constants, the extremal Hall
violator?
\end{question}

The structural half of the question is posed broadly on purpose: a
Hall violator can a priori be any set closed under taking multiples
within \(\{1,\ldots,n\}\), not only a smooth family.  What can be said
in general is that a deficit of neighbors in a window can occur only
when the sets \(\{j:i\mid j\}\) and \(i\in S\) overlap heavily there---and
such overlap imposes a strongly multiplicative constraint on \(S\).
Smoothness is a mechanism for producing such overlap, but other mechanisms may be possible.

An upper bound \(F(n)\le(1/\e+o(1))\,n\log n\) would combine with
Theorem~\ref{thm:main} to give
\(F(n)=(1/\e+o(1))\,n\log n\); we stress, however, that \(1/\e\) is
known to be extremal only within the three-parameter family of
Section~\ref{sec:construction} (see Remark~\ref{rem:sup}), so on the
present evidence the constant is speculative.  In the opposite
direction, a comment of Tao in the discussion thread
for Problem~711 \cite{Tao711,Bloom711} suggests that \eqref{eq:711a} may,
in some sense at least, imply the Kakeya conjecture in all dimensions via
a discretization of tubes into arithmetic progressions; the upper-bound problem should in
any case be expected to be difficult.  A more tractable intermediate goal may be
to improve the exponent \(3/2\), or to establish the \(1/\e\) ceiling
over a broader class of smooth-type certificates.

\section*{Acknowledgments}

I used LLMs to assist with computations, analysis, synthesis, and
verification/refinement in the preparation of this article, especially
GPT-5.6~Sol, GPT-5.$\{\text{2},\text{4},\text{5}\}$~Pro and Claude~Fable~5
(all accessed in part via Poe with the support of Quora, where I am
an advisor). In particular, after I had explored variants of the problem
through both LLM-assisted and analog methods over a period of months, an
exchange with GPT-5.6~Sol suggested changing the scale of the starting
point; that suggestion precipitated the present paper.  The problem,
final methods, and eventual written form are my own; and of course any
errors remain my responsibility.

I gratefully acknowledge helpful comments from Ben Golub and Ken Ono,
as well as Refine.ink, which provided transformative feedback on a
prior draft.  This work was conducted while I was visiting the
Technological Innovation, Entrepreneurship, and Strategic Management
(TIES) Group at the MIT Sloan School of Management; I greatly
appreciate their hospitality.

\providecommand{\bysame}{\leavevmode\hbox to3em{\hrulefill}\thinspace}
\providecommand{\MR}{\relax\ifhmode\unskip\space\fi MR }
\providecommand{\MRhref}[2]{%
  \href{http://www.ams.org/mathscinet-getitem?mr=#1}{#2}
}
\providecommand{\href}[2]{#2}

\end{document}